\newtheorem{theorem}{Theorem}[section]
\newtheorem{observation}[theorem]{Observation}
\newtheorem{conjecture}[theorem]{Conjecture}
\newtheorem{lemma}[theorem]{Lemma}
\newtheorem{remark}[theorem]{Remark}
\newtheorem*{T*}{Theorem}
\newtheorem*{A*}{Proposition}
\newtheorem*{Cor*}{Corollary}
\theoremstyle{definitionbreak}
\theoremstyle{definitionbreak}\newtheorem*{D*}{Definition}
\newtheorem{prelem}{{\bf Theorem}}
\title{On Sidorenko's conjecture for determinants and Gaussian Markov random fields}
\author{Bal\'azs Szegedy}
\begin{document}		
\maketitle

\begin{abstract} We study a class of determinant inequalities that are closely related to Sidorenko's famous conjecture (Also conjectured by Erd\H os and Simonovits in a different form). Our results can also be interpreted as entropy inequalities for Gaussian Markov random fields (GMRF). We call a GMRF on a finite graph $G$ homogeneous if the marginal distributions on the edges are all identical. We show that if $G$ satisfies Sidorenko's conjecture then the differential entropy of any homogeneous GMRF on $G$ is at least $|E(G)|$ times the edge entropy plus $|V(G)|-2|E(G)|$ times the point entropy. We also prove this inequality in a large class of graphs for which Sidorenko's conjecture is not verified including the so-called M\"obius ladder: $K_{5,5}\setminus C_{10}$. The connection between Sidorenko's conjecture and GMRF's is established via a large deviation principle on high dimensional spheres combined with graph limit theory. 
\end{abstract}

\section{Introduction}

For a finite graph $G=(V,E)$ and $x\in (-1,1)$ let $\Psi(G,x)$ denote the set of $V\times V$ matrices $M$ such that
\begin{enumerate}
\item $M$ is positive definite,
\item Every diagonal entry of $M$ is $1$,
\item $M_{i,j}=x$ for every edge $(i,j)$ of $G$.
\end{enumerate}

The strict concavity of the function $M\mapsto \log(\det(M))$ and the convexity of $\Psi(G,x)$ together imply that there is a unique matrix $\Sigma(G,x)$ in $\Psi(G,x)$ which maximizes determinant. For probabilists, this matrix is know as the covariance matrix of the Gaussian Markov random field $\{X_v\}_{v\in V(G)}$ (or shortly GMRF) on $G$ in which $X_v\sim N(0,1)$ holds for every vertex $v$ and $\mathbb{E}(X_iX_j)=x$ holds for every edge $(i,j)$ of $G$.
The function $\tau(G,x):=\det(\Sigma(G,x))$ is an interesting analytic function of $x$ for every fixed graph $G$. One can for example easily see that if $G$ is a tree then $$\tau(G,x)=(1-x^2)^{|E(G)|}$$ and if $G$ is the four cycle then
$$\tau\Bigl(~C_4,\sqrt{X^2/2+x/2}~\Bigr)=1-2x+2x^3-x^4.$$
In general we are not aware of any nice explicit formula for $\tau(G,x)$ however we know that the power series expansion of $\tau(G,x)$ around $0$ has integer coefficients (for the sketch of the proof see chapter \ref{alg}) that carry interesting combinatorial meaning which will be studied in a separate paper. Based on extensive computer experiments (using the algorithm described in chapter \ref{alg}) we conjecture the following.

%If $x\in [0,1)$ then $\Psi(G,x)$ is not empty since it always contains the $V\times V$ matrix with $1's$ in the diagonal and $x's$ elsewhere. If $G$ is bipartite and $x\in (-1,1)$ then $\Psi(G,x)$ is not empty because it contains the matrix $M$ with $1's$ in the diagonal, $x's$ for the edges of the complete bipartite graphs between the color classes of $G$ and $0's$ elsewhere. 

\begin{conjecture}\label{conj1} If $G$ is any graph and $x\in [0,1)$ then $\tau(G,x)\geq(1-x^2)^{|E(G)|}$. 
\end{conjecture}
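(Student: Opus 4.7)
The plan is to reduce Conjecture~\ref{conj1} to a Gaussian/spherical analogue of Sidorenko's conjecture via a large-deviation principle for random Gram matrices on high-dimensional spheres, and then invoke Sidorenko (or its known extensions) wherever it is available. The first step is an \emph{entropy reformulation}: the matrix $\Sigma(G,x)$ is the covariance of a GMRF with differential entropy $h_G(x)=\tfrac12\log\bigl((2\pi e)^{|V(G)|}\tau(G,x)\bigr)$. Setting $h_e(x)=\tfrac12\log\bigl((2\pi e)^2(1-x^2)\bigr)$ and $h_v=\tfrac12\log(2\pi e)$ for the edge and point marginal entropies of the associated homogeneous Gaussian, the bound $\tau(G,x)\geq(1-x^2)^{|E(G)|}$ is equivalent to the additive decomposition
\[
h_G(x) \;\geq\; |E(G)|\,h_e(x)+\bigl(|V(G)|-2|E(G)|\bigr)\,h_v,
\]
which matches the statement of the main theorem announced in the abstract.

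Next, I would embed into a spherical large-deviations problem. For each $d$, place independent uniform random unit vectors $\mathbf{u}_v\in S^{d-1}$ at the vertices of $G$ and consider the event that $\langle \mathbf{u}_u,\mathbf{u}_v\rangle\approx x$ for every edge $(u,v)$. Since $\Sigma(G,x)$ is the unique determinant-maximizer on $\Psi(G,x)$, a Laplace/saddle-point analysis identifies the exponential rate as
\[
\lim_{d\to\infty}\tfrac{1}{d}\log\Pr\Bigl[\,\bigcap_{(u,v)\in E(G)}\langle\mathbf{u}_u,\mathbf{u}_v\rangle\approx x\,\Bigr]\;=\;\tfrac12\log\tau(G,x)-\tfrac{|E(G)|}{2}\log(1-x^2),
\]
so the conjecture is equivalent to the non-negativity of this LDP rate. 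Crucially, the single-edge normalization explains both the exponent $|E(G)|$ and the factor $(1-x^2)$ on the right-hand side of the conjecture.

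The third step is to factor over edges and apply Sidorenko. Smoothing the point constraints represents the probability above as
\[
\int_{(S^{d-1})^{V(G)}}\prod_{(u,v)\in E(G)}K_{d,x}(\mathbf{u}_u,\mathbf{u}_v)\,\prod_{v}d\mu(\mathbf{u}_v),
\]
with a rotationally invariant positive kernel $K_{d,x}$. If $G$ satisfies Sidorenko's conjecture in the continuous (graphon) form applicable here, the integral is bounded below by $\bigl(\int K_{d,x}\bigr)^{|E(G)|}$; combining this with the single-edge rate computation and then passing to $d\to\infty$ via graph-limit theory yields $\tau(G,x)\geq(1-x^2)^{|E(G)|}$. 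This is presumably the mechanism the paper turns into a theorem for all graphs satisfying Sidorenko, together with the extension covering $K_{5,5}\setminus C_{10}$ mentioned in the abstract.

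The main obstacle is two-fold. First, the reduction uses Sidorenko's conjecture as a black box, so it only covers graphs for which Sidorenko (or a known extension) has been established. Second, and more serious, Conjecture~\ref{conj1} is stated for \emph{all} graphs, including non-bipartite ones such as $K_3$ --- where Sidorenko's conjecture does not even apply, yet the inequality can be verified directly from $\tau(K_3,x)=(1-x)^2(1+2x)\geq(1-x^2)^3$. Pushing past the bipartite barrier appears to require a genuinely determinantal inequality tailored to the homogeneous structure of $\Psi(G,x)$ rather than any general graphon inequality, and this, I expect, is the deepest obstruction to a complete proof.
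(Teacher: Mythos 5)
You are asked to compare against ``the paper's own proof'' of the statement, but there is none: Conjecture~\ref{conj1} is stated in the paper as an open conjecture, motivated only by computer experiments and a local expansion (Remark~\ref{remloc}). What the paper actually proves are partial cases: Theorem~\ref{thm1} (the bipartite Conjecture~\ref{conj2} holds for Sidorenko graphs, via the spherical large deviation principle of Theorem~\ref{ldsph} and spherical graphons), and Theorem~\ref{thm2} (Conjecture~\ref{conj2} under a degree condition, via conditionally independent couplings). Your proposal is honest about this: you sketch the LDP-plus-Sidorenko reduction and then explicitly flag that it is conditional on the Sidorenko property and silent on non-bipartite graphs. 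That self-assessment matches the actual state of the paper precisely, so to the extent your text is a description of the paper's attack rather than a proof, it is accurate.

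One technical point to tidy up. You write the limit
\[
\lim_{d\to\infty}\tfrac{1}{d}\log\Pr\Bigl[\bigcap_{(u,v)\in E(G)}\langle\mathbf{u}_u,\mathbf{u}_v\rangle\approx x\Bigr]=\tfrac12\log\tau(G,x)-\tfrac{|E(G)|}{2}\log(1-x^2),
\]
and call the right-hand side ``the LDP rate.'' By Theorem~\ref{ldsph} (and equation~(\ref{limitdens}), specialized to a shrinking interval $A_\epsilon$ around $x$), the correct rate for this event is simply $\tfrac12\log\tau(G,x)$, with no subtracted term. The expression you wrote is the \emph{difference} between that rate and $|E(G)|$ times the single-edge rate, and it is indeed that difference whose non-negativity is the content of Conjecture~\ref{conj1}. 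The logic is salvageable --- you are implicitly comparing to the edge normalization --- but the labelling is misleading and should be separated into (i) the rate computation and (ii) the Sidorenko-style comparison to $|E(G)|$ independent copies of the single-edge rate, exactly as in Lemma~\ptrue\ref{denslem}.

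Your final diagnosis of the obstruction is correct and worth keeping: the reduction only applies to graphs satisfying Sidorenko, and the non-bipartite part of Conjecture~\ref{conj1} is genuinely outside the scope of the paper's machinery. The paper itself only proves the conjecture for complete graphs and cycles in the non-bipartite case (mentioned in passing after the statement of Conjecture~\ref{conj1}), and leaves the general statement open. Your direct verification for $K_3$ is a fine sanity check but does not generalize via any argument presented here. No proof of Conjecture~\ref{conj1} for all graphs currently exists, and you should present your text as a proof of Theorem~\ref{thm1} rather than of Conjecture~\ref{conj1}.
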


Note that since $\tau(e,x)=1-x^2$ for the single edge $e$ the conjecture says that $\tau(G,x)\geq\tau(e,x)^{|E(G)|}$.
It is not hard to verify conjecture \ref{conj1} for complete graphs and cycles. 

\begin{remark}\label{remloc} With some effort one can compute the first non zero coefficient in the power series of $\ln(\tau(G,x))-|E(G)|\ln(1-x^2)$ around $0$ and find that it is positive. This establishes a local version conjecture \ref{conj1} in a small interval $[0,\epsilon)$.
\end{remark}

It is easy to see that if $G$ is bipartite then $\tau(G,x)$ is an even function of $x$ and thus conjecture \ref{conj1} implies the following weaker conjecture.

\begin{conjecture}\label{conj2} If $G$ is any bipartite graph and $x\in (-1,1)$ then $\tau(G,x)$\\ $\geq (1-x^2)^{|E(G)|}$.
\end{conjecture}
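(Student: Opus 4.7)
The plan is to derive Conjecture \ref{conj2} from Sidorenko's conjecture (assumed, for the bipartite graph $G$ at hand) by a large-deviation bridge between determinants and graphon homomorphism densities. First, I would reformulate the statement: since $\log\det$ is strictly concave on positive definite matrices and $\Psi(G,x)$ is convex, $\tfrac{1}{2}\log\tau(G,x)$ equals, up to an explicit additive constant $\tfrac{1}{2}|V(G)|\log(2\pi e)$, the differential entropy $h(X_V)$ of the homogeneous GMRF on $G$ with unit variances and edge correlation $x$. The target inequality $\tau(G,x)\geq(1-x^2)^{|E(G)|}$ is then equivalent to the entropy inequality announced in the abstract,
\[
 h(X_V)\ \geq\ |E(G)|\cdot h(X_i,X_j)\ +\ (|V(G)|-2|E(G)|)\cdot h(X_i),
\]
for a representative edge $(i,j)$. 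Entropy reformulations of Sidorenko-type inequalities have a well established pedigree (Kahn, Lov\'asz, Conlon--Kim--Lee--Lee, etc.), and they will be the correct form to exploit graph-limit duality.

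Next I would realize both sides as $k\to\infty$ asymptotics using i.i.d.\ uniform random unit vectors $u_v\in S^{k-1}$, $v\in V(G)$. For $\epsilon>0$ define the symmetric $\{0,1\}$-valued kernel
\[
 W_\epsilon(u,v):=\mathbf{1}\bigl[\,|\langle u,v\rangle-x|<\epsilon\,\bigr]
\]
on the probability space $(S^{k-1},\sigma)$; then the homomorphism density $t(G,W_\epsilon)$ equals the probability that every edge $(i,j)\in E(G)$ has $|\langle u_i,u_j\rangle-x|<\epsilon$. Using the classical fact that the joint density of the Gram matrix $M=(\langle u_i,u_j\rangle)_{i,j}$ of such vectors is proportional to $\det(M)^{(k-|V(G)|-1)/2}$ on the manifold of correlation matrices, a Laplace expansion around the unique determinant-maximizer $\Sigma(G,x)\in\Psi(G,x)$ yields
\[
 t(G,W_\epsilon)\;=\;c_k\,\epsilon^{|E(G)|}\,\det\Sigma(G,x)^{(k-|V(G)|-1)/2}\bigl(1+o_k(1)\bigr),
\]
with a prefactor $c_k$ that grows only polynomially in $k$. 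The specialization $G=K_2$ gives $t(K_2,W_\epsilon)\asymp\epsilon\,(1-x^2)^{(k-3)/2}$.

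The third step is to invoke Sidorenko's conjecture for the bipartite graph $G$: $t(G,W_\epsilon)\geq t(K_2,W_\epsilon)^{|E(G)|}$ for every $k$ and $\epsilon$. Taking logarithms, dividing by $k$, and letting first $k\to\infty$ and then $\epsilon\to 0$ absorbs all $\epsilon^{|E(G)|}$ and polynomial-in-$k$ factors and leaves precisely
\[
 \tfrac{1}{2}\log\det\Sigma(G,x)\;\geq\;\tfrac{|E(G)|}{2}\log(1-x^2),
\]
which is Conjecture \ref{conj2}.

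The principal obstacle is of course that Sidorenko's conjecture itself is open, so the scheme above is inherently conditional on $G$ being a ``Sidorenko graph''. To make it unconditional one of two routes seems viable: either verify the inequality $t(G,W_\epsilon)\geq t(K_2,W_\epsilon)^{|E(G)|}$ directly for the special family of rotationally invariant kernels $W_\epsilon(u,v)=f(\langle u,v\rangle)$ by a Gegenbauer/spherical-harmonic expansion of $f$ (reducing the inequality to the positivity of certain coefficient products, where the bipartite hypothesis should enter through evenness), or enlarge the list of bipartite graphs for which Sidorenko is known---as the paper attempts for $K_{5,5}\setminus C_{10}$. A secondary technical point is the uniformity of the Laplace expansion near the boundary of $\Psi(G,x)$, where $\Sigma(G,x)$ can degenerate as $|x|\to 1$; this must be controlled to legitimately interchange the $k\to\infty$ and $\epsilon\to 0$ limits.
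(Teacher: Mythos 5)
The statement you are addressing is a conjecture, and the paper does not prove it in full; what it proves is Theorem \ref{thm1} (the conjecture holds for Sidorenko graphs) and Theorem \ref{thm2} (a degree criterion). Your proposal, which you yourself flag as conditional on $G$ being a Sidorenko graph, is essentially the paper's proof of Theorem \ref{thm1}: you use the same spherical kernels $W_\epsilon(u,v)=\mathbf{1}[\,|\langle u,v\rangle-x|<\epsilon\,]$ (the paper's ${\rm Sph}_{A,n}$ with $A=A_\epsilon$), the same Wishart-type density $\propto\det(M)^{(k-|V(G)|-1)/2}$ on the correlation-matrix spectrahedron, the same application of $t(G,W)\geq t(e,W)^{|E(G)|}$, and the same order of limits ($k\to\infty$ then $\epsilon\to 0$).

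One technical point where the paper is cleaner than your sketch: you invoke a ``Laplace expansion around $\Sigma(G,x)$'' and write $t(G,W_\epsilon)=c_k\,\epsilon^{|E(G)|}\det\Sigma(G,x)^{(k-|V(G)|-1)/2}(1+o_k(1))$. For fixed $\epsilon>0$ the exponential rate is governed not by $\det\Sigma(G,x)$ but by $\sup\{\det M : M\in\Psi(G,A_\epsilon)\}$, which is strictly larger (the optimal edge entries sit at the endpoint of $A_\epsilon$ nearer $0$); only after $\epsilon\to0$ does this collapse to $\tau(G,x)$. The paper sidesteps the expansion entirely with the identity $\mu_{k,n}(A)=c_{k,n}\|1_A\det\|_{(n-k-1)/2}^{(n-k-1)/2}$ and the elementary fact $\|f\|_p\to\|f\|_\infty$, which is Theorem \ref{ldsph}; that route avoids both the $\epsilon$--bookkeeping and the prefactor estimate you gesture at, and it makes the interchange-of-limits issue you raise disappear. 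Of your two suggested routes to unconditionality, the second (enlarging the class of graphs) is what Theorem \ref{thm2} does via conditionally independent couplings; the first (proving Sidorenko directly for rotation-invariant kernels by spherical-harmonic expansion) is not pursued in the paper and, as stated, is not obviously easier than the general conjecture --- evenness of $\tau$ in $x$ for bipartite $G$ is already exploited, and nothing in your sketch indicates how positivity of the Gegenbauer coefficients would be established or used.
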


Using a large deviation principle on high dimensional spheres and logarithmic graph limits \cite{Sz1} we will relate the quantities $\tau(G,x)$ to more familiar subgraph densities $t(G,H)$ from extremal combinatorics. If $G$ and $H$ are finite graphs then $t(G,H)$ denotes the probability that a random map from $V(G)$ to $V(H)$ takes edges to edges. We say that $G$ is a {\bf Sidorenko graph} if $t(G,H)\geq t(e,H)^{|E(H)|}$ holds for every graph $H$. Sidorenko's famous conjecture \cite{Sid} says that every bipartite graph is a Sidorenko graph. Even though this conjecture is still open there are many known examples for Sidorenko graphs including rather general infinite families. For literature on Sidorenko's conjecture see: \cite{BP},\cite{Sid},\cite{Hat},\cite{L},\cite{LS},\cite{BR},\cite{KLL},\cite{CFS},\cite{Sz1},\cite{Sz2}. Note that a somewhat stronger version of the conjecture was formulated by Erd\H os and Simonovits in \cite{Sim}. %For example trees, even cycles, hypercubes, hypercubic lattices,  graphs in which one point is connected to the other side, and reflection trees are all Sidorenko. 
The smallest graph for which Sidorenko's conjecture is not known is the so-called M\"obius ladder which is $K_{5,5}\setminus C_{10}$. Our next theorem verifies conjecture \ref{conj2} for all Sidorenko graphs.

\begin{theorem}\label{thm1} If $G$ is a Sidorenko graph then $G$ satisfies conjecture \ref{conj2}.
\end{theorem}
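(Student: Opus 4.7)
The plan is to realise both $\tau(G,x)$ and $(1-x^2)^{|E(G)|}$ as exponential rates of suitably chosen homomorphism densities, and then invoke Sidorenko's inequality at the level of those densities. The bridge between the two sides will be provided by a large deviation principle for Gram matrices of independent uniform vectors on the high-dimensional sphere, combined with the logarithmic graph limit framework of \cite{Sz1}.

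Concretely, I would fix a large integer $d$ and a small $\varepsilon>0$, equip $S^{d-1}$ with its uniform probability measure, and consider the symmetric $\{0,1\}$-valued kernel
\[
W_{d,\varepsilon}(u,v):=\mathbf{1}\bigl[|\langle u,v\rangle-x|<\varepsilon\bigr].
\]
For any finite graph $F$, the homomorphism density $t(F,W_{d,\varepsilon})$ is then the probability that independent uniform vectors $u_1,\dots,u_{|V(F)|}\in S^{d-1}$ satisfy $|\langle u_i,u_j\rangle-x|<\varepsilon$ on every edge of $F$. As $d\to\infty$, the joint law of the off-diagonal Gram entries $(\langle u_i,u_j\rangle)_{i<j}$ obeys an LDP with rate $d$ and good rate function $I(M)=-\tfrac{1}{2}\log\det M$ on the positive definite matrices with unit diagonal (this is essentially the Jacobian of the Gram map, and corresponds to the leading term $(\det M)^{(d-n-1)/2}$ in the density). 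The contraction principle, applied to the event above, then yields
\[
\lim_{\varepsilon\to 0}\lim_{d\to\infty}\frac{2}{d}\log t(F,W_{d,\varepsilon})=\log\sup_{M\in\Psi(F,x)}\det M=\log\tau(F,x).
\]
Specialising to $F=G$ and to $F=e$, where $\tau(e,x)=1-x^2$, furnishes the two asymptotics I need.

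With those in hand I would apply Sidorenko's inequality directly to $W_{d,\varepsilon}$. Because $G$ is a Sidorenko graph, the inequality $t(G,W)\geq t(e,W)^{|E(G)|}$ holds for every symmetric measurable $W:\Omega\times\Omega\to[0,1]$ on any standard probability space $\Omega$ (the usual $[0,1]^2$-graphon statement extends by a measure-isomorphism argument). Setting $W=W_{d,\varepsilon}$, taking logarithms, dividing by $d/2$ and sending $d\to\infty$ followed by $\varepsilon\to 0$, I obtain
\[
\log\tau(G,x)\geq |E(G)|\log(1-x^2),
\]
which is precisely Conjecture \ref{conj2} for $x\in[0,1)$; the case $x\in(-1,0]$ follows since $\tau(G,x)$ is even in $x$ for bipartite $G$.

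The hard step, in my view, is making the LDP of the second paragraph rigorous inside the graph limit picture. The kernel $W_{d,\varepsilon}$ has edge density decaying exponentially in $d$, so one is squarely in a sparse regime and cannot quote the classical dense theory. The logarithmic graph limit framework of \cite{Sz1} appears to be tailored for exactly this scaling, and should supply both the sphere LDP with rate function $-\tfrac{1}{2}\log\det$ and a form of Sidorenko's inequality that survives the logarithmic rescaling. A secondary, more technical care-point is the interchange of the limits in $d$ and $\varepsilon$, which I expect to handle by lower semicontinuity of $I$ together with the strict concavity of $\log\det$ on the convex set $\Psi(F,x)$: the unique maximizer $\Sigma(F,x)$ then depends continuously on the edge constraints, so the two limits commute.
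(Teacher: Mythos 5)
Your proposal matches the paper's argument essentially step for step: the paper defines the spherical graphon $\mathrm{Sph}_{A,n}$ on $S_n\times S_n$ (your $W_{d,\varepsilon}$ with $A=[x-\varepsilon,x+\varepsilon]$), derives from the Wishart-type density the large deviation rate $\tfrac12\ln\|1_{\Psi(G,A)}\det\|_\infty$ for $n^{-1}\ln t(G,\mathrm{Sph}_{A,n})$, applies Sidorenko's inequality to the spherical graphon for each fixed $n$, and finally sends $\varepsilon\to 0$ by continuity of the determinant. One small clarification to your worry about a ``sparse regime'': no sparse or logarithmic graph-limit machinery is actually needed at the point where Sidorenko is invoked, because for each fixed $n$ the spherical graphon is an ordinary measurable kernel $W:\Omega^2\to[0,1]$ and the inequality $t(G,W)\ge t(e,W)^{|E(G)|}$ holds verbatim; the exponential decay of $t(e,W)$ in $n$ only matters afterwards, when one takes logarithms and normalizes by $n$, and there the explicit density from Theorem~\ref{sphere} carries the entire load.
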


A similar theorem for finite state Markov random fields was proved by the author in \cite{Sz1}. The proof of theorem \ref{thm1} does not follow directly from the results in \cite{Sz1}, it relies on the above mentioned large deviation principle on high dimensional spheres (see theorem \ref{sphere} and theorem \ref{ldsph}) which is interesting on its own right. Using methods developed in \cite{Sz2} we also prove the next theorem.

\begin{theorem}\label{thm2} Let $G$ be a bipartite graph with color classes $V_1$ and $V_2$. If
$$\sum_{v\in V_2}  {\rm deg}(v)({\rm deg}(v)-1)\geq |V_1|(|V_1|-1)$$ then $G$ satisfies conjecture \ref{conj2}.
\end{theorem}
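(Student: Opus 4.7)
The plan is to parallel the proof of Theorem \ref{thm1}: combine the large deviation principle on high dimensional spheres with the logarithmic graph limit correspondence sketched in the introduction in order to translate the inequality $\tau(G,x)\geq (1-x^2)^{|E(G)|}$ into a Sidorenko-style density inequality for $G$ against a suitable family of Gaussian test kernels. Once this reduction is in place the remaining task is combinatorial/analytic: to prove the Sidorenko-type inequality for the specific bipartite graphs $G$ satisfying the stated degree hypothesis, rather than for arbitrary Sidorenko graphs as in Theorem \ref{thm1}.

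For the combinatorial step I would import the machinery of \cite{Sz2}. The hypothesis $\sum_{v\in V_2}\deg(v)(\deg(v)-1)\geq |V_1|(|V_1|-1)$ is equivalent to the statement that, if $B$ denotes the bipartite adjacency matrix, then the sum of the off-diagonal entries of $BB^{T}$ is at least $|V_1|(|V_1|-1)$; equivalently, the average number of common neighbours in $V_2$ of an ordered pair of distinct vertices in $V_1$ is at least one. This is exactly the type of ``codegree surplus'' condition that drives the folding/Cauchy--Schwarz strategy of \cite{Sz2}: first embed $V_2$ into the test object via a uniformly random map, then for each $v\in V_2$ apply Jensen's inequality to the conditional embedding of its $\deg(v)$ neighbours in $V_1$, and finally use the codegree hypothesis to show that the resulting sum beats the count of ordered pairs and yields the desired $t(e,\cdot)^{|E(G)|}$ lower bound.

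The main obstacle I expect is reconciling the two settings. The Sidorenko-type inequalities of \cite{Sz2} are naturally formulated for nonnegative symmetric kernels, whereas the Gaussian covariance kernels that arise from $\tau(G,x)$ can take negative values; one should either redo the Jensen-type steps on the concave functional $\log\det$ directly (where convexity works cleanly regardless of sign) or restrict the class of admissible test kernels in a way that is still sufficient for the reduction. A secondary technical point is to confirm that the family of Gaussian test kernels produced by the large deviation principle is rich enough so that the restricted Sidorenko-style inequality is in fact equivalent to the full determinant inequality, which should follow from the same machinery underpinning Theorem \ref{thm1}.
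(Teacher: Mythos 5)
Your proposal diverges substantially from the paper's route, and the divergence creates a real gap rather than just a stylistic difference.

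The paper does \emph{not} prove Theorem~\ref{thm2} by re-running the sphere LDP and establishing a restricted Sidorenko-type density inequality for spherical graphons. Instead, the ``methods from \cite{Sz2}'' that it imports are the \emph{conditional independent coupling} idea, translated directly into the Gaussian/linear-algebraic setting as the operation $A\curlyvee B$ with $\det(A\curlyvee B)=\det(A)\det(B)/\det(C)$ (Lemma~\ref{linalg}). Using this, the paper constructs an explicit witness matrix $M\in\Psi(G,x)$: start from $M_0$ on $V_1$ (ones on the diagonal, $x^2$ off), and for each $v\in V_2$ glue on a star block $M^v$ on $\{v\}\cup N(v)$ via $\curlyvee$. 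Closed-form determinants of $M_0$, $M^v$, and $M^v_{N(v)\times N(v)}$ then give $\det(M)=(1-x^2)^{a+b-1}(1+(a-1)x^2)\prod_i(1+(d_i-1)x^2)^{-1}$, and a short logarithmic-derivative argument shows this is $\geq(1-x^2)^{|E(G)|}$ precisely under the hypothesis $\sum_i d_i(d_i-1)\geq a(a-1)$. The sphere LDP, graphons and densities $t(G,\cdot)$ are not used at all in this proof; they were only needed for Theorem~\ref{thm1}, which had the luxury of assuming the full Sidorenko property.

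The gap in your plan is at exactly the step you leave vague: you say you would ``import the machinery of \cite{Sz2}'' and ``apply Jensen's inequality to the conditional embedding'' to prove the spherical-graphon density inequality from the codegree condition, but you never exhibit the construction or the inequality that makes the degree hypothesis bite. That is the entire content of the theorem. Your codegree reinterpretation of the hypothesis is correct, and the reduction to spherical graphons via \eqref{limitdens} is in principle reversible, but these observations do not by themselves yield the result. Two secondary points: (i) your worry about negative kernels is misplaced -- the test objects in this framework are the $\{0,1\}$-valued spherical graphons ${\rm Sph}_{A,n}$, not the Gaussian covariance kernels, so nonnegativity is never an issue; (ii) routing through the LDP is an unnecessary detour here -- the coupling construction produces a single explicit matrix in $\Psi(G,x)$ whose determinant can be computed and bounded in closed form, which is both shorter and gives a witness for the inequality.
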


Theorem \ref{thm1} verifies conjecture \ref{conj2} for the M\"obius ladder but also for large classes of graphs that are not known to be Sidorenko. In particular theorem \ref{thm2} shows that possible counter examples to conjecture \ref{conj2} have to be quite sparse.

The importance of the function $\tau(G,x)$ is also rooted in the fact that the differential entropy of the GMRF corresponding to $\Sigma(G,x)$ is $$\frac{|V|}{2}\ln(2\pi e)+\frac{1}{2}\ln(\tau(G,x)).$$
The next observation connects the theorems in this paper to differential entropy. Let us denote the differential entropy of a joint distribution $\{X_i\}_{i\in I}$ by $\mathbb{D}(\{X_i\}_{i\in I})$.

\begin{observation}\label{obs1} If $G=(V,E)$ satisfies conjecture \ref{conj2} then every homogeneous GMRF $\{X_v\}_{v\in V}$ on $G$ satisfies the next entropy inequality
\begin{equation}\label{entin}
\mathbb{D}(\{X_v\}_{v\in V})-\sum_{(i,j)\in E}\mathbb{D}(\{X_i,X_j\})+\sum_{v \in V} ({\rm deg}(v)-1)\mathbb{D}(X_v)\geq 0.
\end{equation}
\end{observation}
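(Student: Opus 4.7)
The plan is a direct computation: observation \ref{obs1} is essentially a change of language between the determinantal inequality of conjecture \ref{conj2} and the entropic inequality \eqref{entin}, so the whole argument amounts to plugging in the closed-form expressions for Gaussian differential entropy and checking that the ``global'' factors cancel.

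First I would analyze what a homogeneous GMRF on $G$ looks like. Homogeneity forces every vertex to have a common variance $\sigma^2$ and every edge a common covariance $c$; equivalently, every edge marginal is a bivariate Gaussian with correlation $x=c/\sigma^2$. Writing $\Sigma$ for the covariance matrix of $\{X_v\}_{v\in V}$, the rescaled matrix $\sigma^{-2}\Sigma$ has unit diagonal and edge entries $x$. Because the GMRF is the maximum entropy distribution subject to these second moment constraints, this rescaled matrix must be the determinant-maximizer $\Sigma(G,x)$ from the introduction, giving $\det\Sigma=\sigma^{2|V|}\tau(G,x)$.

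Next I would insert the three standard formulas
$$\mathbb{D}(\{X_v\}_{v\in V})=\tfrac{|V|}{2}\ln(2\pi e\sigma^2)+\tfrac{1}{2}\ln\tau(G,x),$$
$$\mathbb{D}(\{X_i,X_j\})=\ln(2\pi e\sigma^2)+\tfrac{1}{2}\ln(1-x^2),\qquad\mathbb{D}(X_v)=\tfrac{1}{2}\ln(2\pi e\sigma^2),$$
into the left-hand side of \eqref{entin}. Collecting the coefficients of $\ln(2\pi e\sigma^2)$ and using $\sum_{v\in V}\deg(v)=2|E|$, the bookkeeping
$$\tfrac{|V|}{2}-|E|+\tfrac{1}{2}\bigl(2|E|-|V|\bigr)=0$$
shows that the scale-dependent terms cancel. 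What survives is
$$\tfrac{1}{2}\ln\tau(G,x)-\tfrac{|E|}{2}\ln(1-x^2)=\tfrac{1}{2}\ln\!\left(\frac{\tau(G,x)}{(1-x^2)^{|E|}}\right),$$
which is nonnegative by the hypothesis that $G$ satisfies conjecture \ref{conj2}.

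There is no genuine obstacle here; the only point to be careful about is the identification of the GMRF covariance matrix with $\Sigma(G,x)$ (up to the scalar $\sigma^2$), so that the determinantal inequality can be applied. Once that identification is made, the computation is purely mechanical.
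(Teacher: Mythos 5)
Your proof is correct and follows exactly the route the paper has in mind: the paper itself gives no explicit proof of the observation, but the surrounding text (the formula $\tfrac{|V|}{2}\ln(2\pi e)+\tfrac12\ln\tau(G,x)$ for the entropy of the GMRF, and the restatement ``the differential entropy of the whole field is at least $|E(G)|$ times the edge entropy plus $|V(G)|-2|E(G)|$ times the point entropy'') is precisely the bookkeeping you carry out. Your argument is a faithful and complete write-up of that calculation, including the one step the paper leaves implicit — identifying the covariance of a homogeneous GMRF with $\sigma^2\,\Sigma(G,x)$ via the maximum-entropy characterization of Markov random fields.
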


Using the homogeneity of $\{X_v\}_{v\in V}$, the inequality in the observation is equivalent with the fact that the differential entropy of the whole field is at lest $|E(G)|$ times the edge entropy plus $|V(G)|-2|E(G)|$ times the point entropy. Formally, the point entropies are only needed to cancel the extra additive constant in the formula for differential entropy however we believe that they may become important in a mere general circle of questions. The left hand side of (\ref{entin}) is an interesting invariant for general GMRF's where the marginals are not necessarily equal. 

\section{A large devation principle on the sphere}

It is well known that if $k\in\mathbb{N}$ is a fixed number and $n$ is big compared to $k$ then if we choose independent uniform vectors $v_1,v_2,\dots,v_k$ in the sphere $S_{n-1}=\{x|x\in\mathbb{R}^n, \|x\|_2=1\}$ then with probability close to one the vectors are close to be pairwise orthogonal. It will be important for us to estimate the probability of the atypical event that the scalar product matrix $(v_i,v_j)_{1\leq i,j\leq k}$ is close to some matrix $A$ that is separated from the identity matrix. Let $\lambda_k$ denote the Lebesgue measure on the space of symmetric $k\times k$ matrices with $1's$ in the diagonal. In this chapter we give a simple formula for the density function $(v_i,v_j)_{1\leq i,j\leq k}$ relative to the Lebesgue measure $\lambda_k$. Using this formula we prove a large deviation principle for the scalar product matrices of random vectors.

\begin{theorem}\label{sphere} Assume that $n\geq k\geq 2$ are integres. Let $v_1,v_2,\dots,v_k$ be independent, uniform random elements on the sphere $S_{n-1}$ and let $M(k,n)$ be the $k\times k$ matrix with entries $M(k,n)_{i,j}:=(v_i,v_j)$. The probability density function $f_{k,n}$ of $M(k,n)$ is supported on the set $\mathcal{M}_k$ of positive semidefinte $k\times k$ matrices with $1's$ in the diagonal entries and is given by the formula $$f_{k,n}(M)=\det(M)^{(n-k-1)/2}\Gamma(n/2)^k\Gamma_k(n/2)^{-1}$$ where $\Gamma_k$ is the multivariate $\Gamma$-function.
\end{theorem}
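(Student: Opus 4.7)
The plan is to derive $f_{k,n}$ by passing from uniform vectors on $S_{n-1}$ to standard Gaussians in $\mathbb{R}^n$, and then invoking the classical Wishart density. Let $X_1,\ldots,X_k$ be i.i.d.\ $N(0,I_n)$ vectors, put $r_i=\|X_i\|$ and $v_i=X_i/r_i$. Then the $v_i$ are i.i.d.\ uniform on $S_{n-1}$ and independent of the radii $r_i$, while the Gram matrix $W$ with entries $W_{ij}=\langle X_i,X_j\rangle$ follows the Wishart distribution whose density on positive definite matrices is
$$p(W)=\frac{1}{2^{nk/2}\,\Gamma_k(n/2)}\det(W)^{(n-k-1)/2}\exp\!\left(-\tfrac{1}{2}\operatorname{tr}(W)\right).$$

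The main step is the change of variables $W\mapsto(r_1,\ldots,r_k,M)$, where $M$ is precisely the Gram matrix of the $v_i$'s. Since $W_{ii}=r_i^2$ and $W_{ij}=r_ir_jM_{ij}$ for $i<j$, the map is block-triangular (the $M_{ij}$ coordinates appear only in the off-diagonal $W_{ij}$'s) and a direct computation yields Jacobian $2^k\prod_i r_i^k$. Combined with $\det(W)=\det(M)\prod_i r_i^2$ and $\operatorname{tr}(W)=\sum_i r_i^2$, the joint density of $(r_1,\ldots,r_k,M)$ factorises as
$$c\cdot\det(M)^{(n-k-1)/2}\cdot\prod_{i=1}^{k} r_i^{\,n-1}e^{-r_i^2/2},$$
for an explicit constant $c=2^k\cdot 2^{-nk/2}\Gamma_k(n/2)^{-1}$ depending only on $n$ and $k$.

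To finish I would marginalise out the radii using $\int_0^\infty r^{n-1}e^{-r^2/2}\,dr=2^{(n-2)/2}\Gamma(n/2)$. The $k$ copies of this integral, together with the Jacobian factor $2^k$, exactly cancel the $2^{nk/2}$ in the Wishart normalisation and leave the factor $\Gamma(n/2)^k$, giving
$$f_{k,n}(M)=\Gamma(n/2)^k\,\Gamma_k(n/2)^{-1}\det(M)^{(n-k-1)/2}$$
as claimed. The support statement is automatic since $M=D^{-1/2}WD^{-1/2}$ with $D=\operatorname{diag}(r_i^2)$ has unit diagonal and is positive semidefinite (in fact positive definite almost surely when $n\ge k$).

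There is no genuine conceptual obstacle here: the only real work is bookkeeping, namely tracking the powers of $2$ and of the $r_i$'s through the substitution, and verifying that the $(v_i)$ and $(r_i)$ are independent---a classical observation which is what legitimises reading off the marginal density of $M$ from the factorised joint density and so gives the theorem.
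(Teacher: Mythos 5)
Your proof is correct, and it is essentially the same approach as the paper's: both exploit the fact that scaling a uniform vector on $S_{n-1}$ by an independent $\chi_n$ radius produces a standard Gaussian vector, and both read off the answer from the Wishart density of the resulting Gram matrix. The only difference is cosmetic — the paper conditions on all radii being equal to $1$ and divides by $g_n(1)^k$, whereas you change variables to $(r_1,\dots,r_k,M)$, compute the Jacobian $2^k\prod_i r_i^k$, and marginalise the radii out; these are dual ways of using the same independence of radius and direction, and both yield the identical normalising constant $\Gamma(n/2)^k\Gamma_k(n/2)^{-1}$.
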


\begin{proof} Let $\{X_i\}_{i=1}^k$ be a system of $k$ independent $\chi_n$ distributions.
Let $M'(k,n)$ be the $k\times k$ matrix with entries $M'(k,n)_{i,j}:=(X_iv_i,X_jv_j)=X_iX_j(v_i,v_j)$. We have that $M'(k,n)_{i,i}=X_i^2$ holds for $1\leq i\leq k$. The definition of the $\chi_n$ distribution and the spherical symmetry of the $n$ dimensional standard normal distribution imply that $X_iv_i$ is an $n$ dimensional standard normal distribution. We obtain that the distribution of $M'(k,n)$ is the Wishart distribution corresponding to the $k\times k$ identity matrix . It follows that the density function $\tilde{f}_{k,n}$ of $M'(k,n)$ is supported on positive semidefinite matrices and is given by $$\tilde{f}_{k,n}(M)=\det(M)^{(n-k-1)/2}e^{-{\rm tr}(M)/2}2^{-kn/2}\Gamma_k(n/2)^{-1}.$$
The next step is to compute the conditional distribution of $M'(k,n)$ in the set $M'(k,n)_{i,i}=X_i^2=1$ for $1\leq i\leq k$. Using the fact that the density function $g_n(x)$ of $\chi_n^2$ is
$$g_n(x)=x^{n/2-1}e^{-x/2}2^{-n/2}\Gamma(n/2)^{-1}$$
the statement of the proposition follows from $f_{k,n}(M)=\tilde{f}_{k,n}(M)/g_n(1)^k$ for $M\in\mathcal{M}_k$ and $f_{k,n}(M)=0$ for $M\notin\mathcal{M}_k$. 
\end{proof}

\begin{remark} It is a nice fact that theorem \ref{sphere} allows us to give an explicit formula for the volume of the spectahedron $\mathcal{M}_k$. If $n=k+1$ then $f_{k,n}$ is a constant function and by the fact that it is a density function, this constant is the inverse volume of $\mathcal{M}_k$. We obtain that ${\rm Vol}(\mathcal{M}_k)=\Gamma((k+1)/2)^{-k}\Gamma_k((k+1)/2)$.
\end{remark}

\begin{lemma}\label{asym} For $k\geq 2$ we have that $$\lim_{n\to\infty}\frac{\Gamma(n/2)^k\Gamma_k(n/2)^{-1}}{(n/(2\pi))^{k(k-1)/4}}=1.$$
\end{lemma}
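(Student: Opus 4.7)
The plan is to reduce the quotient to a product of classical Gamma-ratios and then apply the standard asymptotic $\Gamma(x+a)/\Gamma(x) \sim x^a$.

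First I would unfold the definition of the multivariate Gamma function,
\begin{equation*}
\Gamma_k(z) = \pi^{k(k-1)/4}\prod_{j=0}^{k-1}\Gamma\!\left(z-\tfrac{j}{2}\right),
\end{equation*}
so that with $z=n/2$ one of the $k$ factors of $\Gamma(n/2)$ in the numerator cancels against the $j=0$ factor in $\Gamma_k(n/2)$. This leaves
\begin{equation*}
\frac{\Gamma(n/2)^k}{\Gamma_k(n/2)} = \pi^{-k(k-1)/4}\prod_{j=1}^{k-1}\frac{\Gamma(n/2)}{\Gamma((n-j)/2)}.
\end{equation*}

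Next I would estimate each individual ratio. Writing $x=(n-j)/2$ and $a=j/2$, Stirling's formula (or equivalently the standard asymptotic $\Gamma(x+a)/\Gamma(x)=x^{a}(1+O(1/x))$) gives
\begin{equation*}
\frac{\Gamma(n/2)}{\Gamma((n-j)/2)} = \left(\tfrac{n-j}{2}\right)^{j/2}\bigl(1+O(1/n)\bigr) = \left(\tfrac{n}{2}\right)^{j/2}\bigl(1+o(1)\bigr)
\end{equation*}
as $n\to\infty$, with $j$ fixed. Multiplying over $j=1,\dots,k-1$ and using $\sum_{j=1}^{k-1}j = k(k-1)/2$ yields
\begin{equation*}
\prod_{j=1}^{k-1}\frac{\Gamma(n/2)}{\Gamma((n-j)/2)} = \left(\tfrac{n}{2}\right)^{k(k-1)/4}\bigl(1+o(1)\bigr).
\end{equation*}

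Combining this with the prefactor $\pi^{-k(k-1)/4}$ gives $\Gamma(n/2)^k/\Gamma_k(n/2) \sim (n/(2\pi))^{k(k-1)/4}$, which is exactly the claim. There is no real obstacle here: the only thing to be careful about is that the product has only $k-1$ non-trivial terms (one factor cancels) so that the exponent comes out to $k(k-1)/4$ rather than $k(k+1)/4$, and that the $o(1)$ errors from finitely many Stirling expansions can be collected uniformly since $k$ is fixed.
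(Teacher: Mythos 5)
Your proof is correct and takes essentially the same approach as the paper: both unfold the multivariate Gamma function $\Gamma_k(n/2)=\pi^{k(k-1)/4}\prod_{j=0}^{k-1}\Gamma((n-j)/2)$ and then apply the standard asymptotic $\Gamma(x+a)/\Gamma(x)\sim x^a$. The paper just organizes the resulting product telescopically as $c_n^{k-1}c_{n-1}^{k-2}\cdots c_{n-k+2}$ with $c_r=\pi^{-1/2}\Gamma(r/2)/\Gamma((r-1)/2)$, whereas you estimate each ratio $\Gamma(n/2)/\Gamma((n-j)/2)$ directly; the two bookkeepings are algebraically equivalent.
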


\begin{proof}
It is straightforward from the furmulas that
$$\Gamma(n/2)^k\Gamma_k(n/2)^{-1}=c_n^{k-1}c_{n-1}^{k-2}\dots c_{n-k+2}$$ where $c_r=\pi^{-1/2}\Gamma(r/2)/\Gamma((r-1)/2)$. It is well known that $\lim_{r\to\infty}\Gamma(r)\Gamma(r-\alpha)^{-1}r^{-\alpha}=1$. It follows that $\lim_{r\to\infty} c_r(r/(2\pi))^{-1/2}=1$ which completes the proof.
\end{proof}

Now we are ready to formulate and prove our large deviation principle. Let us denote by $\mu_{k,n}$ the probability measure corresponding to the random matrix model $M(k,n)$ defined in theorem \ref{sphere}. We have that $\mu_{k,n}$ is concentrated on the closed set $\mathcal{M}_k$. If $n\geq k\geq 2$ then $\mathcal{M}_k$ is a compact convex set of positive measure in the space of symmetric $k\times k$ matrices with ones in the diagonal. For a measurable function $f:\mathcal{M}_k\rightarrow\mathbb{R}$ we denote by $\|f\|_\infty$ the essential maximum of $f$ relative to the measure $\lambda_k$. Note that $\|f\|_\infty$ can differ from $\sup_{x\in\mathcal{M}_k}f(x)$ because changes in $f$ on $0$ measure sets are ignored. In general will use the norms $\|.\|_p$ for functions on $\mathcal{M}_k$.

\begin{theorem}[Large deviation principle on the sphere]\label{ldsph} Let $k\geq 2$ be a fixed integer. Let $A\subseteq\mathcal{M}_k$ be a Borel measurable set. We have that
$$ \lim_{n\to\infty} n^{-1}\ln(\mu_{n,k}(A))=(1/2)\ln\|1_A\det\|_\infty.$$
\end{theorem}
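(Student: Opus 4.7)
The plan is to combine the explicit density formula from Theorem \ref{sphere} with the asymptotic of Lemma \ref{asym}, reducing the statement to a Laplace-type asymptotic for $\int_A \det(M)^{(n-k-1)/2}\,d\lambda_k(M)$.

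Starting from Theorem \ref{sphere},
\[
\mu_{n,k}(A)=\Gamma(n/2)^k\Gamma_k(n/2)^{-1}\int_A \det(M)^{(n-k-1)/2}\,d\lambda_k(M).
\]
Taking logarithms and dividing by $n$, Lemma \ref{asym} gives
\[
n^{-1}\ln\bigl(\Gamma(n/2)^k\Gamma_k(n/2)^{-1}\bigr)=n^{-1}\ln\bigl((n/(2\pi))^{k(k-1)/4}(1+o(1))\bigr)\to 0,
\]
so it suffices to show
\[
\lim_{n\to\infty} n^{-1}\ln\int_A \det(M)^{(n-k-1)/2}\,d\lambda_k(M)=(1/2)\ln\|1_A\det\|_\infty.
\]
Set $L:=\|1_A\det\|_\infty$. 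Note that $\mathcal{M}_k$ is compact (it is bounded since the diagonal entries are $1$ and positive semidefiniteness forces $|M_{i,j}|\leq 1$), hence $\lambda_k(\mathcal{M}_k)<\infty$.

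For the upper bound, since $\det(M)\leq L$ for $\lambda_k$-a.e.\ $M\in A$, we have
\[
\int_A \det(M)^{(n-k-1)/2}\,d\lambda_k(M)\leq L^{(n-k-1)/2}\lambda_k(A),
\]
and taking $n^{-1}\ln$ gives a limsup bounded by $(1/2)\ln L$. For the lower bound, fix $\varepsilon>0$ with $\varepsilon<L$; by definition of essential supremum, the set $B_\varepsilon:=\{M\in A:\det(M)>L-\varepsilon\}$ satisfies $\lambda_k(B_\varepsilon)>0$. Then
\[
\int_A \det(M)^{(n-k-1)/2}\,d\lambda_k(M)\geq (L-\varepsilon)^{(n-k-1)/2}\lambda_k(B_\varepsilon),
\]
and taking $n^{-1}\ln$ yields a liminf of at least $(1/2)\ln(L-\varepsilon)$. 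Sending $\varepsilon\to 0$ closes the sandwich. The degenerate cases ($L=0$ or $\lambda_k(A)=0$) are handled separately and give $-\infty$ on both sides with the usual convention $\ln 0=-\infty$.

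There is no real obstacle, but the only point one has to be slightly careful about is the essential-supremum bookkeeping: in the upper bound one must use $\det(M)\leq L$ almost everywhere (not pointwise, which could fail on a null set where the excerpt's definition of $\|\cdot\|_\infty$ ignores), and in the lower bound one must genuinely produce a set of positive Lebesgue measure on which $\det$ is close to $L$, which is exactly what the definition of $\|\cdot\|_\infty$ in the paragraph preceding Theorem \ref{ldsph} guarantees. Everything else is a standard Laplace asymptotic, with the boundedness of $\lambda_k(\mathcal{M}_k)$ ensuring the prefactor $\lambda_k(A)$ contributes $o(n)$ on the log scale.
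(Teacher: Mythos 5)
Your proof is correct and follows essentially the same route as the paper: both reduce to the asymptotics of $\int_A \det(M)^{(n-k-1)/2}\,d\lambda_k$ after stripping off the normalizing constant via Lemma \ref{asym}. The only difference is presentational: the paper recognizes the remaining integral as $\|1_A\det\|_{(n-k-1)/2}^{(n-k-1)/2}$ and invokes the standard fact $\lim_{p\to\infty}\|f\|_p=\|f\|_\infty$ on the finite measure space $(\mathcal{M}_k,\lambda_k)$, whereas you re-prove that fact from scratch with the two-sided sandwich (and correctly flag the compactness of $\mathcal{M}_k$ as what makes the prefactor negligible).
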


\begin{proof} We have from theorem \ref{sphere} that
$$\mu_{k,n}(A)=c_{k,n}\int_{A}(\det)^{n-k-1} d\lambda_k=c_{k,n}\int_{\mathcal{M}_k}1_A(\det)^{n-k-1} d\lambda_k=$$
$$c_{k,n}\int_{\mathcal{M}_k}(1_A\det)^{n-k-1} d\lambda_k=c_{k,n}\|1_A\det\|_{(n-k-1)/2}^{(n-k-1)/2}$$ where $c_{k,n}=\Gamma(n/2)^k\Gamma_k(n/2)^{-1}$.
It follows that
$$n^{-1}\ln(\mu_{k,n}(A))=\ln(c_{k,n}^{1/n})+\frac{n+k-1}{2n}\ln\|1_A\det\|_{(n-k-1)/2}.$$
From lemma \ref{asym} we get that
$$\lim_{n\to\infty} \ln(c_{k,n}^{1/n})=0.$$
Now the statement of the theorem follows from $\lim_{p\to\infty}\|1_A\det\|_p=\|1_A\det\|_\infty$.
\end{proof}

\section{Spherical graphons and the proof of theorem \ref{thm1}}

A {\bf graphon} (see \cite{LSz}) is symmetric measurable function of the form $W:\Omega^2\rightarrow [0,1]$ where $(\Omega,\mu)$ is a standard probability space. If $G$ is a finite graph then it makes sense to introduce the "density" of $G$ in $W$ using the formula 
$$t(G,W)=\int_{x\in\Omega^{V(G)}}\prod_{(i,j)\in E(G)}W(x_i,x_j)~ d\mu^k.$$ 
Note that the conjecture of Sideronko was originally stated in this integral setting and it says that $t(G,W)\geq t(e,W)^{|E(G)|}$ holds for every bipartite graph $G$ and graphon $W$.

In this chapter we prove theorem \ref{thm1} using special graphons that we call {\bf spherical graphons}.
Let $A\subseteq [-1,1]$ be a Borel measurable set and let $n$ be a natural number. Let us define the graphon ${\rm Sph}_{A,n}:S_n\times S_n\rightarrow [0,1]$ such that
${\rm Sph}_{A,n}(x,y)=1$ if $(x,y)\in A$ and ${\rm Sph}_{A,n}(x,y)=0$ if $(x,y)\notin A$.

For a Borel measurable set $A\subseteq [-1,1]$ and graph $G$ let $\Psi(G,A)$ denote the set of positive semidefinite $V(G)\times V(G)$ matrices $M$ such that the diagonal entries of $M$ are all $1's$ and $M_{i,j}\in A$ holds for every $(i,j)\in E(G)$. It is clear that using the notation from the previous chapter we have that
$$ t(G,{\rm Sph}(A,n-1))=\mu_{|V(G)|,n}(\Psi(G,A)) $$
It follows from theorem \ref{ldsph} that
\begin{equation}\label{limitdens}
\lim_{n\to\infty} n^{-1}\ln(t(G,{\rm Sph}(A,n))=(1/2)\ln\|1_{\Psi(G,A)}\det\|_\infty.
\end{equation}
Now we get the next lemma.
\begin{lemma}\label{denslem} Assume that $A\subseteq [-1,1]$ is a Borel set and $G$ is a Sidorenko graph.
Then
$$\|1_{\Psi(G,A)}\det\|_\infty\geq \|1_{\Psi(e,A)}\|_\infty^{|E(G)|}.$$
\end{lemma}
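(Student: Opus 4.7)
The plan is to apply the Sidorenko property of $G$ directly to the family of spherical graphons $\mathrm{Sph}_{A,n}$ and then pass to the limit using the large deviation principle already established in equation~(\ref{limitdens}). I read the right-hand side of the lemma as $\|1_{\Psi(e,A)}\det\|_\infty^{|E(G)|}$, which is the only interpretation consistent with the surrounding text (for the single edge $e$ this quantity equals $\sup_{x\in A}(1-x^2)$).

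First, I would note that $\mathrm{Sph}_{A,n}$ is a bona fide $\{0,1\}$-valued graphon, so for every $n$ the Sidorenko hypothesis applies and gives
\begin{equation*}
t(G,\mathrm{Sph}_{A,n})\;\geq\;t(e,\mathrm{Sph}_{A,n})^{|E(G)|}.
\end{equation*}
Taking logarithms, dividing by $n$, and sending $n\to\infty$ yields
\begin{equation*}
\lim_{n\to\infty} n^{-1}\ln t(G,\mathrm{Sph}_{A,n})\;\geq\;|E(G)|\cdot\lim_{n\to\infty} n^{-1}\ln t(e,\mathrm{Sph}_{A,n}),
\end{equation*}
provided both limits exist.

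The existence and value of these limits is exactly the content of equation~(\ref{limitdens}) (itself a consequence of Theorem~\ref{ldsph} combined with the identity $t(G,\mathrm{Sph}(A,n-1))=\mu_{|V(G)|,n}(\Psi(G,A))$ recorded just above the lemma). Substituting gives
\begin{equation*}
\tfrac{1}{2}\ln\|1_{\Psi(G,A)}\det\|_\infty\;\geq\;|E(G)|\cdot\tfrac{1}{2}\ln\|1_{\Psi(e,A)}\det\|_\infty,
\end{equation*}
and exponentiating delivers the claimed inequality.

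I do not anticipate a genuine obstacle: the argument is really a short deduction that the Sidorenko inequality is preserved under the $n^{-1}\log$ asymptotics of spherical graphon densities. The only mildly delicate point is handling the degenerate case in which $\|1_{\Psi(e,A)}\det\|_\infty = 0$ (e.g., $A\subseteq\{-1,1\}$ or $A$ $\lambda_1$-null), in which case the right-hand side is $0$ and the inequality is automatic; passing to $\ln$ makes this explicit via the convention $\ln 0=-\infty$, so no separate case analysis is needed. Everything else is a direct invocation of Theorem~\ref{ldsph}.
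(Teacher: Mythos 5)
Your proof is correct and takes essentially the same approach as the paper: apply the Sidorenko hypothesis to the spherical graphons $\mathrm{Sph}_{A,n}$ for each $n$, take logarithms and divide by $n$, and pass to the limit using~(\ref{limitdens}). You also rightly identified the typo in the statement (the missing $\det$ on the right-hand side) and your remark on the degenerate case $\|1_{\Psi(e,A)}\det\|_\infty=0$ is a sensible precaution the paper leaves implicit.
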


\begin{proof} 
The Sidorenko property of $G$ implies that for every $n$ we have that
$$n^{-1}\ln(t(G,{\rm Sph}(A,n))\geq |E(G)|n^{-1}\ln(t(e,{\rm Sph}(A,n)).$$
Then (\ref{limitdens}) completes the proof by taking the limit $n\to\infty$.
\end{proof}

To prove theorem \ref{thm1} let $x\in (-1,1)$ arbitrary and let $A_\epsilon:=[x-\epsilon,x+\epsilon]\cap (-1,1)$.
It follows from the continuity of determinants that $\lim_{\epsilon\to 0}\|1_{\Psi(G,A_\epsilon)}\det\|_\infty=\tau(G,x)$ holds for every graph $G$.
Then lemma \ref{denslem} complets the proof.

\section{Conditional independent couplings and the proof of theorem \ref{thm2}}

In the proof of theorem \ref{thm2} we will use a gluing operation for positive definite matrices that corresponds to conditional independent couplings of Gaussian distributions in the probabilistic setting.

\begin{lemma}\label{linalg}
Assume that $X$ and $Y$ are two finite sets with $X\cap Y=Z$ and $X\cup Y=Q$. Assume furthermore $A\in\mathbb{R}^{X\times X}$ and $B\in\mathbb{R}^{Y\times Y}$ are two positive definite matrices such that their submatrices  $A_{Z\times Z}$ and $B_{Z\times Z}$ are equal to some matrix $C\in\mathbb{R}^{Z\times Z}$. Let $\tilde{A},\tilde{B}$ and $\tilde{C}$ be the matrices in $\mathbb{R}^{Q\times Q}$ obtained from $A^{-1},B^{-1}$ and $C^{-1}$ by putting zeros to the remaining entries. Then the matrix
$$D:=(\tilde{A}+\tilde{B}-\tilde{C})^{-1}$$ satisfies the following conditions.
\begin{enumerate}
\item $D_{X\times X}=A$ , $D_{Y\times Y}=B$,
\item $D$ is positive definite
\item $\det(D)=\det(A)\det(B)\det(C)^{-1}$.
\end{enumerate}

\begin{proof} The statement can be checked with elementary linear algebraic methods. To highlight the connection to probability theory we give the probabilistic proof which is also more elegant. We can regard $A,B$ and $C$ as covariance matrices of Gaussian distributions $\mu_A,\mu_B$ and $\mu_C$ on $\mathbb{R}^X,\mathbb{R}^Y$ and $\mathbb{R}^Z$ with density functions $f_A,f_B$ and $f_C$. The condition $A_{Z\times Z}=B_{Z\times Z}$ is equivalent with the fact that the marginal distribution of both $\mu_A$ and $\mu_B$ on $\mathbb{R}^Z$ is equal to $\mu_C$. The conditional independent coupling of $\mu_A$ and $\mu_B$ over the marginal $\mu_C$ has density function
$$f(v)=(2\pi)^{-|Q|/2}e^{-\frac{1}{2}(vP_XA^{-1}P_Xv^T+vP_YB^{-1}P_Yv^T-vP_ZC^{-1}P_Zv^T)}$$
where $P_X,P_Y$ and $P_Z$ are the projections to the coordiantes in $X,Y$ and $Z$.
We have by the definition of $D$ that
$$f(v)=(2\pi)^{-|Q|/2}e^{-\frac{1}{2} vD^{-1}v^T}$$
and thus $f$ is the density function of the Gaussian distribution $\mu$ with covariance matrix $D$.
The first property of $D$ follows from that fact that the marginals of $\mu$ on $X$ and $Y$ are have covariance matrices $A$ and $B$.
The second property of $D$ follows from the fact that it is a covariance matrix of a non-degenerated Gaussian distribution. The third property follows from the fact that
$$0=\mathbb{D}(\mu)-\mathbb{D}(\mu_A)-\mathbb{D}(\mu_B)+\mathbb{D}(\mu_C)$$ holds for the differential entropies in a conditionally independent coupling. On the other hand the right hand side is equal to
$$\ln(\det(D))-\ln(\det(A))-\ln(\det(B))+\ln(\det(C)).$$
\end{proof}

\end{lemma}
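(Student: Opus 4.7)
The plan is to give a direct linear-algebraic construction of $D$ and verify the three properties in turn. Set $X' := X \setminus Z$ and $Y' := Y \setminus Z$, so that $Q = X' \sqcup Z \sqcup Y'$. I would first \emph{define} a symmetric matrix $D \in \mathbb{R}^{Q\times Q}$ in block form with respect to this decomposition by copying $A$ onto the $X\times X$ block (so in particular $D_{ZZ}=C$), copying $B$ onto the $Y\times Y$ block (consistent on the overlap $Z\times Z$ by hypothesis), and setting the remaining cross block to
\[
D_{X'Y'} := A_{X'Z}\, C^{-1}\, B_{ZY'}.
\]
Property (1) is then built into the definition. The lemma is proved by showing that this $D$ is positive definite with determinant $\det A\det B/\det C$ and that it satisfies $D(\tilde A+\tilde B-\tilde C)=I$.

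For (2) and (3), I would take the Schur complement of the $Z$-block. The specific choice of $D_{X'Y'}$ is engineered so that, after subtracting the rank-$|Z|$ correction
\[
\begin{pmatrix} A_{X'Z} \\ B_{Y'Z}\end{pmatrix} C^{-1} \begin{pmatrix} A_{ZX'} & B_{ZY'}\end{pmatrix},
\]
the resulting $(X'\cup Y')\times(X'\cup Y')$ matrix is \emph{block-diagonal} with blocks $S_A := A_{X'X'} - A_{X'Z} C^{-1} A_{ZX'}$ and $S_B := B_{Y'Y'} - B_{Y'Z} C^{-1} B_{ZY'}$. Both $S_A$ and $S_B$ are Schur complements inside positive definite matrices, hence positive definite, so $D$ is positive definite by the standard Schur test; the determinant factors as $\det D = \det C\cdot \det S_A \cdot \det S_B$, which combined with $\det A = \det C\det S_A$ and $\det B = \det C\det S_B$ gives (3).

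For the identification $D^{-1}= \tilde A+\tilde B-\tilde C$, I would expand $D(\tilde A+\tilde B-\tilde C)$ block by block. Since $D_{X\times X}=A$, the product $D\tilde A$ is the identity on columns indexed by $X$, and its only nontrivial off-$X$ block is $(D\tilde A)_{Y'Z}$; using $D_{Y'X}=B_{Y'Z}C^{-1}A_{ZX}$ and that $A_{ZX}A^{-1}$ is the projection onto $Z$, this off-block equals $B_{Y'Z}C^{-1}$. A symmetric statement holds for $D\tilde B$, and the two stray terms are precisely cancelled by the corresponding blocks of $D\tilde C$ (which is the identity on columns in $Z$ after the appropriate inclusions), yielding $D(\tilde A+\tilde B-\tilde C)=I$.

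The main obstacle I anticipate is the bookkeeping in this last step: each individual cancellation is mechanical, but maintaining clean notation for the inclusions $Z\hookrightarrow X$ and $Z\hookrightarrow Y$ is what makes the computation readable. As a conceptual sanity check I would also run the probabilistic translation — viewing $A,B,C$ as covariances of Gaussians $\mu_A,\mu_B,\mu_C$ with matching $\mathbb{R}^Z$-marginal, forming the conditionally independent coupling over $\mu_C$, and reading off the entropy additivity identity for (3) — since that picture explains \emph{why} $D_{X'Y'}$ has to be the "conditional independence" choice $A_{X'Z}C^{-1}B_{ZY'}$.
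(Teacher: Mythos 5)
Your proof is correct, and it takes the route the paper explicitly declines to write out. Where the paper argues probabilistically — interpreting $A,B,C$ as covariances of Gaussians $\mu_A,\mu_B,\mu_C$, forming the conditionally independent coupling over the common $\mathbb{R}^Z$-marginal, reading off $D$ as its covariance, and getting the determinant identity from additivity of differential entropy under conditional independence — you instead construct $D$ explicitly by filling in the cross block $D_{X'Y'}=A_{X'Z}C^{-1}B_{ZY'}$ and verify everything with Schur complements and a direct block computation of $D(\tilde A+\tilde B-\tilde C)=I$. The block computation does go through as you sketch: $D\tilde A$ has identity on the $X$-columns, zeros on the $Y'$-columns except for a $B_{Y'Z}C^{-1}$ in the $(Y',Z)$ block (using $A_{Z,X}A^{-1}=(0\;\;I_Z)$ as you say), the symmetric statement holds for $D\tilde B$, and both stray blocks are exactly the nonzero off-$Z$ blocks of $D\tilde C$, so the sum is the identity. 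Your approach is more self-contained and makes the determinant formula and positive-definiteness mechanical via the Schur factorization $\det D=\det C\cdot\det S_A\cdot\det S_B$; the paper's approach is shorter and makes the \emph{origin} of the construction transparent (conditional independence given the $Z$-coordinates), which you rightly flag as the conceptual explanation for the choice of $D_{X'Y'}$. Both are complete proofs; yours carries out the "elementary linear algebraic methods" the paper alludes to in its first sentence.
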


We will refer to the matrix $D$ and the {conditionally independent coupling of $A$ and $B$ (over $C$) and we denote it by $A\curlyvee B$.

Let $G$ be a bipartite graph on color classes $V_1$ and $V_2$ such that $V_1\sqcup V_2=V$. Let $x\in (-1,1)$ be an arbitrary number. Our goal in this chapter is to build up a matrix $M$ in $\Psi(G,x)$ using a sequence of conditionally independent couplings. Then we show that if $G$ satisfies the degree condition in theorem \ref{thm2} then $\det(M)\geq (1-x^2)^{|E(G)|}$. Thus we construct a witness matrix for the fact that $\tau(G,x)\geq (1-x^2)^{|E(G)|}$.

Let $M_0$ be the $V_1\times V_1$ matrix with $1$'s in the diagonal and $x^2$ elsewhere. For $v\in V_2$ let $N(v)\subseteq V_1$ denote the set of neighbors of $v$ and let $M^v$ be the $(\{v\}\cup N(v))\times(\{v\}\cup N(v))$ matrix that has $1's$ in the diagonal and $M^v_{v,w}=M^v_{w,v}=x$ for every $w\in N(v)$ furthermore $M^v_{i,j}=x^2$ for $i,j\in N(v)$ with $i\neq j$. We have the followings:
\begin{enumerate}
\item $\det(M_0)=(1-x^2)^{a-1}(1+(a-1)x^2)$,
\item $\det(M^v)=(1-x^2)^{d_v}$,
\item $\det(M^v_{N(v)\times N(v)})=(1-x^2)^{d_v-1}(1+(d_v-1)x^2)$.
\end{enumerate} 
where $a=|V_1|$ and $d_v=|N(v)|$.
Assume that $V_2=\{1,2,\dots,b\}$. Let
$$M:=(\dots((M_0\curlyvee M^1)\curlyvee M^2)\dots)\curlyvee M^b$$
be the conditionally independet coupling of the matrices $M_0,M^1,M^2,\dots,M^b$.
Using all the previous formulas, lemma \ref{linalg} and the fact that $M\in\Psi(G,x)$ we obtain that
\begin{equation}\label{lbound}
\tau(G,x)\geq \det(M)=(1-x^2)^{a+b-1}(1+(a-1)x^2)\prod_{i=1}^b(1+(d_i-1)x^2)^{-1}.
\end{equation}

For the proof of theorem \ref{thm2} it remains to show the next lemma.

\begin{lemma} If $a,b,d_i$ and $x$ are as above and $\sum_{i=1}^b d_i(d_i-1)\geq a(a-1)$ then the right hand side of (\ref{lbound}) is at least $(1-x^2)^{|E(G)|}$.
\end{lemma}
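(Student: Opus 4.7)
The plan is to recast the claim as a single-variable polynomial inequality and prove it via a logarithmic-derivative argument.

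First, set $y = x^2 \in [0,1)$ and $S = |E(G)| = \sum_{i=1}^b d_i$. Dividing the asserted inequality by $(1-y)^{a+b-1}$ rearranges it to
$$1 + (a-1)y \;\geq\; (1-y)^{S-a-b+1}\prod_{i=1}^b \bigl(1 + (d_i-1)y\bigr). \quad (\star)$$
A preliminary step is to check that the hypothesis forces $S \geq a+b-1$, so the exponent in $(\star)$ is non-negative. Writing $e_i := d_i - 1 \geq 0$ (after discarding isolated vertices of $V_2$, which leaves both sides of the asserted inequality unchanged and so is harmless), the elementary bound $\sum e_i^2 \leq (\sum e_i)^2$ together with $\sum e_i^2 + \sum e_i \geq a(a-1)$ forces $\sum e_i \geq a-1$, as desired.

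Next, apply the substitution $t = y/(1-y) \in [0,\infty)$, under which $1 + (d-1)y = (1+dt)/(1+t)$. A short calculation transforms $(\star)$ into the cleaner polynomial inequality
$$\prod_{i=1}^b (1 + d_i t) \;\leq\; (1 + at)(1+t)^{S-a}, \qquad t \geq 0. \quad (\star\star)$$
To prove $(\star\star)$, I consider
$$h(t) := \log(1+at) + (S-a)\log(1+t) - \sum_{i=1}^b \log(1+d_i t),$$
so the target becomes $h(t) \geq 0$ on $[0,\infty)$. A direct computation gives $h(0) = 0$, $h'(0) = 0$, and
$$h''(0) = \sum_{i=1}^b d_i(d_i - 1) - a(a-1) \geq 0,$$
which is exactly the hypothesis.

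The strategy is to upgrade this local positivity to the pointwise inequality $h'(t) \geq 0$ on $[0,\infty)$, namely
$$\frac{a}{1+at} + \frac{S-a}{1+t} \;\geq\; \sum_{i=1}^b \frac{d_i}{1+d_i t}. \quad (\dagger)$$
Once $(\dagger)$ is in hand, $h$ is non-decreasing on $[0,\infty)$ and hence $h(t) \geq h(0) = 0$, yielding $(\star\star)$. Clearing denominators in $(\dagger)$, the task becomes non-negativity on $[0,\infty)$ of a polynomial $N(t)$ of degree at most $b+1$, with $N(0) = 0$ and $N'(0) = \sum d_i(d_i-1) - a(a-1) \geq 0$. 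In small cases $N$ factors transparently---for instance $a=3$ with $d_1=d_2=d_3=2$ gives $N(t) = 6t^2$; $a=3$ with $d_1=4$ gives $N(t) = 6t(1+2t)$; and in the extremal tight cases $(d_1,\ldots,d_b) = (a,1,\ldots,1)$ we have $N \equiv 0$ with $(\star\star)$ an identity. The main obstacle is to establish $N(t) \geq 0$ in full generality; the cleanest route I foresee is to expand the coefficients of $N$ via Newton's identities in terms of the power sums $p_k = \sum d_i^k$, isolate the dependence on $p_2$ (the quantity directly controlled by the hypothesis), and verify non-negativity of the remaining contributions using symmetric-function inequalities that reflect how the admissible configurations interpolate between the trivial case and the extremal $(a,1,\ldots,1)$.
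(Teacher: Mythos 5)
Your overall strategy --- pass to $y=x^2$, observe equality at $y=0$, and try to upgrade it by showing a suitable logarithm is non-decreasing --- is exactly the route the paper takes. Your preliminary reductions (discarding isolated vertices of $V_2$, checking $S\ge a+b-1$) are also correct. But the proof has a genuine gap at the crucial step: you reduce the lemma to the pointwise inequality $(\dagger)$, i.e.\ $h'(t)\ge 0$ on $[0,\infty)$, and then instead of proving it you only sketch a plan (clear denominators to get a polynomial $N$, attack its non-negativity via Newton's identities and symmetric-function inequalities). None of that sketch is executed, and it is not at all clear it would close; since $(\dagger)$ carries the entire content of the lemma, leaving it unproved leaves the lemma unproved.

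What the sketch overlooks is that $(\dagger)$ has a short direct proof --- and that your substitution $t=y/(1-y)$, while harmless, obscures the simplification that makes it work. A small algebraic regrouping of $h'$ gives the identity
\[
h'(t)=\frac{t}{1+t}\left(\sum_{i=1}^b\frac{d_i(d_i-1)}{1+d_i t}-\frac{a(a-1)}{1+at}\right),
\]
which one can check directly (it is the chain-rule restatement, in the $t$-variable, of the paper's inequality (\ref{eq2})). So $(\dagger)$ is equivalent to
\[
\sum_{i=1}^b\frac{d_i(d_i-1)}{1+d_i t}\;\ge\;\frac{a(a-1)}{1+at},
\]
and this follows in one line: every vertex of $V_2$ has at most $a=|V_1|$ neighbours, so $d_i\le a$, hence $1+d_i t\le 1+at$ and
\[
\sum_{i=1}^b\frac{d_i(d_i-1)}{1+d_i t}\;\ge\;\frac{1}{1+at}\sum_{i=1}^b d_i(d_i-1)\;\ge\;\frac{a(a-1)}{1+at}
\]
by the hypothesis. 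This is precisely the bound the paper applies (in the $y$-variable). So your route is correct in spirit but stops one observation short of a complete argument, and the proposed Newton's-identities detour is both unnecessary and unfinished.
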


\begin{proof} By using that $\sum_{i=1}^b d_i=|E(G)|$ we can simplify the desired inequality to
$$(1+(a-1)y)(1-y)^{a-1}\geq\prod_{i=1}^b (1+(d_i-1)y)(1-y)^{d_i-1}$$ where $y=x^2$. For $y=0$ both sides are equal to $1$. Now it is enough to show that the logarithmic derivative of the left hand side is at least the logarithmic derivative of the right hand side for every $y\in(0,1)$. After taking logarithmic derivative of both sides and simplifying by $-y$ it remains to show that
\begin{equation}\label{eq2}
\frac{a(a-1)}{1+(a-1)y}\leq\sum_{i=1}^b\frac{d_i(d_i-1)}{1+(d_i-1)y}.
\end{equation}
Now using $d_i\leq a$ we have that $$\frac{d_i(d_i-1)}{1+(d_i-1)y}\geq \frac{d_i(d_i-1)}{1+(a-1)y}$$ holds for $1\leq i\leq b$ and this together with $\sum_{i=1}^b d_i(d_i-1)\geq a(a-1)$ and (\ref{eq2}) finishes the proof.
\end{proof}

\section{the recoupling algorithm}\label{alg}

We finish this paper with an algorithm that we used in computer experiments to approximate the determinant maximizing matrices $\Sigma(G,x)$. The algorithm can also be used to compute the coefficients in the power series expansion of $\tau(G,x)$.

Let $G=(V,E)$ be a fixed graph and $x\in [0,1)$. Let $M_0(G,x)$ denote the $V\times V$ matrix with $1's$ in the diagonal and $x$ elsewhere. Our algorithm produces a sequence of matrices $M_i(G,x)$ recursively with increasing determinants such that they converge to $\Sigma(G,x)$. 

\noindent{\bf the recursive step} {\it To produce $M_{i+1}(G,x)$ from $M_i(G,x)$ we choose a non-edge $e_{i+1}:=(v,w)\in V\times V$ with $e_{i+1}\notin E$. Let $A:= V\setminus{v}$ and $B:=V\setminus{w}$. Then we set $$M_{i+1}(G,x):=M_i(G,x)_{A\times A}\curlyvee M_i(G,x)_{B\times B}.$$}

The algorithm depends on a choice of non edges $e_1,e_2,\dots$. Our choice is to repeat a fix ordering of all non-edges several times. One can also perform the algorithm with formal matrices in which the entries are rational functions of $x$. It is easy to see by induction that in each step the entries remain of the form $f(x)/(1+xg(x))$ for some polynomials $f,g\in\mathbb{Z}(x)$. This implies that the powers series expansions of the entries have integer coefficients. These coefficients stabilize during the algorithm and this provides a method to compute the power series of $\tau(G,x)$ around $0$. The formulas for the coefficients will be given in a subsequent paper.

\subsection*{Acknowledgement.} The research leading to these results has received funding from the European Research Council under the European Union's Seventh Framework Programme (FP7/2007-2013) / ERC grant agreement n$^{\circ}$617747. The research was partially supported by the MTA R\'enyi Institute Lend\"ulet Limits of Structures Research Group.

\end{document}